\documentclass[12pt]{article}
\usepackage{amsmath}
\usepackage{amssymb}
\usepackage{amsthm}
\usepackage{verbatim}
\usepackage{mathrsfs}


\newcommand{\R}[0]{\mathbb R}

\newcommand{\Ds}[0]{\mathcal D}

\newtheorem{Th}{Theorem}[section]
\newtheorem{Lemma}{Lemma}[section]
\newtheorem{Prop}[Lemma]{Proposition}


\begin{document}

\title{On the local well--posedness of the two component $b$-family of equations}
\author{H. Inci}

\maketitle

\begin{abstract}
	In this paper we consider the two component $b$-family of equations on $\R$. We write the equations on a Sobolev type diffeomorphism group. As an application of this formulation we show that the dependence on the initial data is nowhere locally uniformly continuous. In particular it is nowhere locally Lipschitz and nowhere locally H\"older continuous.
\end{abstract}

\section{Introduction}\label{section_introduction}

The initial value problem for the two component $b$-family of equations on $\R$ is given by
\begin{align}
	\nonumber
	&u_t - u_{txx} + (b+1) u u_x = bu_x u_{xx}+u u_{xxx} + \rho \rho_x,\quad t > 0,\;x \in \R,\\
	\label{b_eq}
	&\rho_t+(\rho u)_x=0,\quad t > 0,\;x \in \R,\\
	\nonumber
	&u(t=0)=u_0,\;\rho(t=0)=\rho_0,
\end{align}
which is a generalization of the $b$-family of equations -- see \cite{bfamily} for the $b$-family of equations corresponding to \eqref{b_eq} in the case $\rho \equiv 0$. For $b=2$ we get the two component Camassa-Holm equation and for $b=3$ the two component Degasperis-Procesi equation.\\
Using Kato's semigroup theory it was shown in \cite{bfamily_lwp} that \eqref{b_eq} is locally well-posed in $(u,\rho) \in H^s(\R) \times H^{s-1}(\R),\;s \geq 2$. Later this was improved to $(u,\rho) \in H^s(\R) \times H^{s-1}(\R),\;s > 3/2$, in \cite{bfamily_lwp}, where local well posedness in a range of Besov spaces was established. As a by-product of our diffeomorphism group formulation we will get the same local well-posedness result in Sobolev spaces as in \cite{bfamily_lwp}, i.e. 

\begin{Th}\label{th_lwp}
Let $s > 3/2$. For every $(u_0,\rho_0) \in H^s(\R) \times H^{s-1}(\R)$ there is a $T > 0$ s.t. there is a unique pair
	\[
		(u,\rho) \in C([0,T];H^s(\R) \times H^{s-1}(\R)) \cap C^1([0,T];H^{s-1}(\R) \times H^{s-2}(\R)),
	\]
satisfying \eqref{b_eq}. For $T > 0$ we denote by $U_T \subset H^s(\R) \times H^{s-1}(\R)$ the set of initial values $(u_0,\rho_0)$ for which the solution to \eqref{b_eq} exists longer than time $T$. Then the time $T$ solution map
	\[
		\Phi_T:U_T \to H^s(\R) \times H^{s-1}(\R),\quad (u_0,\rho_0) \mapsto (u(T),\rho(T))
	\]
is continuous. Here $(u(T),\rho(T))$ is the time $T$ value of the solution $(u,\rho)$ corresponding to the initial value $(u_0,\rho_0)$.
\end{Th}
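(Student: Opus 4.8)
The plan is to recast the Eulerian system \eqref{b_eq} as a smooth ordinary differential equation on the tangent bundle of a Sobolev diffeomorphism group, solve it by the Picard--Lindel\"of theorem on Banach spaces, and then push the solution back to Eulerian variables. First I would apply $A^{-1}$, where $A = 1 - \p_x^2$, to the first equation. Writing the first line of \eqref{b_eq} as $A u_t = -(b+1)uu_x + b u_x u_{xx} + u u_{xxx} + \rho\rho_x$ and using $A(uu_x) = uu_x - 3u_xu_{xx} - uu_{xxx}$, the system takes the transport form
\[
u_t + u u_x = -\p_x A^{-1}\Big(\tfrac{b}{2}u^2 + \tfrac{3-b}{2}u_x^2 - \tfrac12\rho^2\Big),\qquad \rho_t + u\rho_x = -u_x\rho,
\]
in which every right--hand side has the same Sobolev order as the corresponding unknown: the operator $\p_x A^{-1}$ gains one derivative, and $H^{s-1}(\R)$ is a Banach algebra for $s > 3/2$.

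Next I would introduce the group $\mathcal D^s(\R) = \{\varphi = \id + f : f \in H^s(\R),\ \varphi' > 0\}$, a Hilbert manifold and topological group for $s > 3/2$, together with the flow $\dot\varphi = u\circ\varphi$, $\varphi(0) = \id$. Setting the Lagrangian variables $v = u\circ\varphi$ and $r = \rho\circ\varphi$, the convective derivatives satisfy $\tfrac{d}{dt}(u\circ\varphi) = (u_t + uu_x)\circ\varphi$ and similarly for $r$, so conjugating the transport system by $\varphi$ turns it into the first--order ODE
\[
\dot\varphi = v,\qquad \dot v = P(\varphi,v,r),\qquad \dot r = Q(\varphi,v,r),
\]
on $\mathcal D^s(\R) \times H^s(\R)\times H^{s-1}(\R)$, where $P = -\big(R_\varphi\,\p_x A^{-1}R_{\varphi^{-1}}\big)\big(\tfrac{b}{2}v^2 + \cdots\big)$ with $R_\varphi: w\mapsto w\circ\varphi$, and $Q$ is the conjugate of $-u_x\rho$. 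All the $\varphi$--dependent coefficients arising from the chain rule, such as $u_x\circ\varphi = v_x/\varphi_x$, are well defined because $\varphi_x$ is continuous and bounded below.

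The main obstacle --- and the whole point of the geometric formulation --- is to prove that the vector field $(P,Q)$ is smooth (indeed real--analytic) on this manifold, even though right translation $R_\varphi$ alone is merely continuous. The key lemma to establish is that the conjugated nonlocal operator $\varphi\mapsto R_\varphi\,\p_x A^{-1} R_{\varphi^{-1}}$ is a smooth map from $\mathcal D^s(\R)$ into the bounded operators on $H^{s-1}(\R)$; this uses the explicit kernel $\tfrac12 e^{-|x|}$ of $A^{-1}$, so that the loss of one derivative from $\p_x$ is absorbed by the smoothing and the $\varphi$--dependence enters only through composition against a smooth kernel. Granting this, the Cauchy--Lipschitz theorem on Banach spaces yields a unique $C^1$ local solution $(\varphi,v,r)$ on some $[0,T]$ depending smoothly on the initial data $(\id,u_0,\rho_0)$, with the existence time uniform on a neighborhood, so that $U_T$ is open.

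Finally I would transfer back by $u = v\circ\varphi^{-1}$, $\rho = r\circ\varphi^{-1}$, and verify the asserted regularity $C([0,T];H^s\times H^{s-1})\cap C^1([0,T];H^{s-1}\times H^{s-2})$: the drop by one order reflects that $u_t = -uu_x - \p_x A^{-1}(\cdots)$ lies in $H^{s-1}(\R)$ while $\rho_t = -(u\rho_x + u_x\rho)$ lies in $H^{s-2}(\R)$, using the multiplication estimates for the indicated exponents. Continuity of $\Phi_T$ then follows because the Lagrangian flow depends smoothly on the data and the maps $\varphi\mapsto\varphi^{-1}$ and $R_{\varphi^{-1}}$ are continuous on $\mathcal D^s(\R)$. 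It is precisely the mere continuity --- rather than differentiability --- of $R_{\varphi^{-1}}$ that blocks any improvement of $\Phi_T$ beyond continuity, which is consistent with the nowhere locally uniformly continuous dependence announced in the abstract.
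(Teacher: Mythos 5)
Your proposal follows essentially the same route as the paper: pass to the nonlocal form, conjugate by the flow map to obtain a smooth ODE on a product of $\Ds^s(\R)$ with Sobolev spaces, solve by Picard--Lindel\"of, and return to Eulerian variables using the merely continuous composition and inversion on $\Ds^s(\R)$. Two points of divergence are worth noting. You keep $r=\rho\circ\varphi$ as a dynamical variable in a first--order system, whereas the paper integrates the continuity equation exactly, $\varphi_x\cdot(\rho\circ\varphi)=\rho_0$, i.e.\ $\rho(t)=(\rho_0/\varphi_x)\circ\varphi(t)^{-1}$, reducing everything to a second--order ODE $\varphi_{tt}=F(\varphi,\varphi_t,\rho_0)$ in $\varphi$ alone with $\rho_0$ as a parameter; your equation $\dot r=-(v_x/\varphi_x)r$ together with $\tfrac{d}{dt}\varphi_x=v_x$ integrates to exactly this conservation law, so the formulations are equivalent, but the explicit formula for $\rho$ is what the paper exploits later for the non--uniformity argument. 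Your kernel--based argument for the smoothness of $\varphi\mapsto R_\varphi\,\p_x A^{-1}R_{\varphi^{-1}}$ is a viable alternative to the paper's mechanism (which rests on $\p_x(w\circ\varphi^{-1})\circ\varphi=w_x/\varphi_x$ and analytic inversion of the conjugated differential operator), though the target must be the bounded operators from $H^{s-1}(\R)$ to $H^s(\R)$, not on $H^{s-1}(\R)$, for the vector field to land in $H^s(\R)$ as your own derivative count requires.

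The one genuine gap is uniqueness. Picard--Lindel\"of gives uniqueness only among solutions of your Lagrangian ODE, while the theorem asserts uniqueness of $(u,\rho)$ in the Eulerian class $C([0,T];H^s\times H^{s-1})\cap C^1([0,T];H^{s-1}\times H^{s-2})$. To close this you must show that \emph{every} Eulerian solution in that class lifts to a solution of the ODE: given such a $(u,\rho)$, the flow equation $\varphi_t=u(t)\circ\varphi$, $\varphi(0)=\id$, has a unique solution $\varphi\in C^1([0,T];\Ds^s(\R))$ (the result of \cite{lagrangian} that the paper invokes), one then checks that $\varphi$ is in fact $C^2$ and solves the ODE, and only after that does ODE uniqueness transfer back to $(u,\rho)$. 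This is precisely the content of the paper's Lemma \ref{uniqueness} and is absent from your outline.
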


A natural question is how regular the solution map $\Phi_T$ is, e.g. whether $\Phi_T$ is $C^1$ or at least locally Lipschitz. In \cite{bfamily_nonuniform} it was shown that there is a bounded set in $H^s(\R) \times H^{s-1}(\R),\; s > 5/2$, on which $\Phi_T$ is not uniformly continuous. We will improve this both w.r.t $s$ and w.r.t. to the non uniformity. Our main result reads as

\begin{Th}\label{th_nonuniform}
Let $s > 3/2$ and $T > 0$. Then
	\begin{align*}
		\Phi_T:U_T &\subset H^s(\R) \times H^{s-1}(\R) \to H^s(\R) \times H^{s-1}(\R),\\
		(u_0,\rho_0) &\mapsto \Phi_T((u_0,\rho_0))=(u(T),\rho(T)),
	\end{align*}
is nowhere locally uniformly continuous.
\end{Th}

Theorem \ref{th_nonuniform} tells us that $\Phi_T$ fails to be uniformly continuous on any ball $B \subset U_T$ regardless of how small the ball is. In particular $\Phi_T$ is nowhere locally Lipschitz, nowhere locally H\"older continuous and hence nowhere $C^1$.\\ \\
Our strategy to prove Theorem \ref{th_nonuniform} is similar to the procedure in \cite{bfamily} and consists of two steps. In a first step we will write \eqref{b_eq} in Lagrangian coordinates as an equation on a diffeomorphism group, i.e. we consider the flow map $\varphi$ of $u$
\[
	\varphi_t(t,x)=u(t,\varphi(t,x)),\quad \varphi(0,x)=x,
\]
and write \eqref{b_eq} in terms of $\varphi$. The second equation in \eqref{b_eq} reads as
\[
	\frac{d}{dt} \left(\varphi_x \cdot \rho \circ \varphi\right) =0
\]
or
\[
	\rho(t)=\left(\frac{\rho_0}{\varphi_x}\right) \circ \varphi(t)^{-1}.
\]
In the second step we will use this composite expression for a ``moving hump'' argument to produce non uniformity.

\section{Lagrangian formulation}

The goal of this section is to write the equations \eqref{b_eq} in terms of the flow map of $u$, i.e. in terms of $\varphi$ given by
\[
	\varphi_t(t)=u(t) \circ \varphi(t),\; t \geq 0,\quad \varphi(0)=\text{id}.
\]
Here $\text{id}:\R \to \R,\;x \mapsto x$, is the identity map. We introduced in \cite{composition} the diffeomorphism group $\Ds^s$ based on Sobolev spaces. This space will be the configuration space for $\varphi$. More precisely for $s > 3/2$ we define
\[
	\Ds^s(\R):=\{\varphi:\R \to \R \;|\; \varphi-\text{id} \in H^s(\R),\; \operatorname{det}(d_x \varphi) > 0 \;\forall x \in \R \}.
\]
Here $H^s(\R)$ is the Sobolev space of order $s$, i.e.
\[
	H^s(\R):=\{f \in L^2(\R) \;|\; \|f\|_{H^s}:=\left(\int_\R (1+|\xi|^2)^s |\hat f(\xi)|^2 \;d\xi \right)^{1/2} < \infty \},
\]
where $\hat f$ is the Fourier transform of $f$. By the Sobolev Imbedding Theorem the function space $\Ds^s(\R),\; s > 3/2$, consists of $C^1$ diffeomorphisms. By the imbedding
\[
	\Ds^s(\R) \to H^s(\R),\quad \varphi \mapsto \varphi - \text{id}
\]
we can identify $\Ds^s(\R)$ with an open subset of $H^s(\R)$, thus we get a differential structure on $\Ds^s(\R)$. In \cite{composition} it was shown that the space $\Ds^s(\R),\;s > 3/2$, is a topological group under composition. That $\Ds^s(\R)$ is the right space follows from a result in \cite{lagrangian}, that says that for every $u \in C([0,T];H^s(\R)),\; T > 0$, there is a unique $\varphi \in C^1([0,T];\Ds^s(\R))$ satisfying
\[
	\varphi_t(t)=u(t) \circ \varphi(t),\; t \in [0,T],\quad \varphi(0)=\text{id}.
\]
To get a Lagrangian formulation of \eqref{b_eq} we write the first equation in \eqref{b_eq} in non local form
\begin{equation}\label{b_nonlocal}
	u_t + uu_x = (1-\partial_x^2)^{-1} \left(-b uu_x+(b-3)u_x u_{xx} + \rho \rho_x \right)
\end{equation}
Now let $\varphi$ be the flow map of $u$. By differentiating $\varphi_t=u \circ \varphi$ w.r.t. $t$ we get
\[
	\varphi_{tt}=(u_t+uu_x) \circ \varphi.
\]
Using \eqref{b_nonlocal} we get
\[
	\varphi_{tt} = \left((1-\partial_x^2)^{-1} \left(-b uu_x+(b-3)u_x u_{xx} + \rho \rho_x \right)\right) \circ \varphi.
\]
As noted in the introduction we have
\[
	\rho=\left(\frac{\rho_0}{\varphi_x}\right) \circ \varphi^{-1}.
\]
If we use this expression for $\rho$ and $u=\varphi_t \circ \varphi^{-1}$ we get
\begin{align*}
	\nonumber
	\varphi_{tt} &= \Big((1-\partial_x^2)^{-1} \Big(-b \varphi_t \circ \varphi^{-1} \cdot \partial_x (\varphi_t \circ \varphi^{-1}) +(b-3)\partial_x(\varphi_t \circ \varphi^{-1}) \partial_x^2(\varphi_t \circ \varphi^{-1}) \\
	&+  \left(\frac{\rho_0}{\varphi_x}\right) \circ \varphi^{-1} \cdot \partial_x \left(\left(\frac{\rho_0}{\varphi_x}\right) \circ \varphi^{-1} \right) \Big)\Big) \circ \varphi=:F(\varphi,\varphi_t,\rho_0).
\end{align*}

We've proved in \cite{bfamily} that the expressions appearing in $F(\varphi,\varphi_t,\rho_0)$ are analytic in $(\varphi,\varphi_t,\rho_0)$. More precisely
\begin{align*}
	\Ds^s(\R) \times H^s(\R) &\to H^s(\R),\\
	(\varphi,\varphi_t) &\mapsto \left((1-\partial_x^2)^{-1} \left( \varphi_t \circ \varphi^{-1} \cdot \partial_x (\varphi_t \circ \varphi^{-1})\right)\right) \circ \varphi,
\end{align*}
and
\begin{align*}
	\Ds^s(\R) \times H^s(\R) &\to H^s(\R),\\
	(\varphi,\varphi_t) &\mapsto \left((1-\partial_x^2)^{-1} \left(\partial_x(\varphi_t \circ \varphi^{-1}) \partial_x^2(\varphi_t \circ \varphi^{-1}) \right)\right) \circ \varphi,
\end{align*}
and
\begin{align*}
	\Ds^s(\R) \times H^{s-1}(\R) &\to H^s(\R),\\
	(\varphi,\rho_0) &\mapsto \left((1-\partial_x^2)^{-1} \left(\left( \frac{\rho_0}{\varphi_x}\right) \circ \varphi^{-1} \cdot \partial_x \left(\left(\frac{\rho_0}{\varphi_x}\right)\circ \varphi^{-1}\right) \right)\right) \circ \varphi,
\end{align*}
are analytic maps. Consult \cite{bfamily} for detailed computations. Let us just point out here that the mechanism underlying analyticity is that conjugation with $\varphi^{-1}$ in combination with $\partial_x$
\[
	 \partial_x (w \circ \varphi^{-1}) \circ \varphi = \frac{w_x}{\varphi_x}
\]
gives an expression involving only derivatives and multiplication. So we get

\begin{Prop}\label{prop_analytic}
Let $s > 3/2$. Then the map
	\[
		\Ds^s(\R) \times H^s(\R) \times H^{s-1}(\R) \to H^s(\R),\quad (\varphi,\varphi_t,\rho_0) \mapsto F(\varphi,\varphi_t,\rho_0)
	\]
is analytic.
\end{Prop}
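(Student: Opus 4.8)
The plan is to observe that $F$ is assembled from the three analytic ``building-block'' maps displayed immediately above the statement, so that the proposition follows from permanence properties of analytic maps rather than from any fresh estimate. Reading off the definition of $F(\varphi,\varphi_t,\rho_0)$, and using that $(1-\partial_x^2)^{-1}$ is a (bounded) linear operator and that right composition $w \mapsto w \circ \varphi$ by a fixed $\varphi$ is linear, both $(1-\partial_x^2)^{-1}$ and $(\,\cdot\,)\circ\varphi$ distribute over the sum of the three terms inside the brackets. Hence
\begin{align*}
	F(\varphi,\varphi_t,\rho_0) = -b\,G_1(\varphi,\varphi_t) + (b-3)\,G_2(\varphi,\varphi_t) + G_3(\varphi,\rho_0),
\end{align*}
where $G_1,G_2\colon \Ds^s(\R)\times H^s(\R)\to H^s(\R)$ and $G_3\colon \Ds^s(\R)\times H^{s-1}(\R)\to H^s(\R)$ are precisely the three maps whose analyticity is asserted above and established in \cite{bfamily}.

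It then remains to invoke two standard facts about real-analytic maps between open subsets of Banach spaces. First, analyticity is preserved under pre-composition with a bounded linear map: in the chart $\varphi \mapsto \varphi - \text{id}$ identifying $\Ds^s(\R)$ with an open subset of $H^s(\R)$, the forgetful projections
\[
	\pi_{12}\colon (\varphi,\varphi_t,\rho_0) \mapsto (\varphi,\varphi_t), \qquad \pi_{13}\colon (\varphi,\varphi_t,\rho_0) \mapsto (\varphi,\rho_0)
\]
are restrictions of coordinate projections on the product Banach space $H^s(\R) \times H^s(\R) \times H^{s-1}(\R)$, hence linear and continuous, and therefore analytic. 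Composing, $G_1\circ\pi_{12}$, $G_2\circ\pi_{12}$ and $G_3\circ\pi_{13}$ are analytic on the full triple product $\Ds^s(\R)\times H^s(\R)\times H^{s-1}(\R)$. Second, a finite linear combination, with the fixed scalar coefficients $-b$, $b-3$ and $1$, of analytic maps valued in the Banach space $H^s(\R)$ is again analytic. Applying these two facts to the displayed decomposition yields the analyticity of $F$.

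The point worth emphasizing is that there is essentially no analytic obstacle left to overcome here: the substantive work, namely that conjugation by $\varphi^{-1}$ combined with a derivative turns each nonlocal nonlinearity into an expression built solely from pointwise multiplication and differentiation (as highlighted in the identity $\partial_x(w\circ\varphi^{-1})\circ\varphi = w_x/\varphi_x$), is exactly the content of the three cited building-block results. Consequently the only items to verify are bookkeeping: that the coefficients $-b$, $b-3$, $1$ and the arguments $(\varphi,\varphi_t)$ versus $(\varphi,\rho_0)$ in the decomposition match the definition of $F$ term by term, and that $(1-\partial_x^2)^{-1}$ raises Sobolev regularity by two orders, consistent with every summand landing in $H^s(\R)$. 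These are routine, and the proposition follows.
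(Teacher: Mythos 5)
Your proposal is correct and follows essentially the same route as the paper: the paper likewise reduces the analyticity of $F$ to the three building-block maps established in \cite{bfamily}, using that $(1-\partial_x^2)^{-1}$ and right composition with $\varphi$ are linear so that $F$ is a fixed scalar combination of those analytic maps. The only remaining content in both arguments is the bookkeeping you describe, so nothing is missing.
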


By applying the Picard-Lindel\"of Theorem we get local in time existence for solutions of \eqref{b_eq}.

\begin{Lemma}\label{local_existence}
Let $s > 3/2$. Then for every initial value $(u_0,\rho_0) \in H^s(\R) \times H^{s-1}(\R)$ there is $T > 0$ and
	\[
		(u,\rho) \in C([0,T];H^s(\R) \times H^{s-1}(\R)) \cap C^1([0,T];H^{s-1}(\R) \times H^{s-2}(\R)),
	\]
satisfying
	\begin{align}
		\nonumber
		&u_t + uu_x = (1-\partial_x^2)^{-1} \left(-b uu_x+(b-3)u_x u_{xx} + \rho \rho_x \right),\\
		\label{nonlocal_eq}
		&\rho_t +(\rho u)_x=0,\\
		\nonumber
		&u(0)=u_0,\;\rho(0)=\rho_0
	\end{align}
	on $[0,T]$.
\end{Lemma}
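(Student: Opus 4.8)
The plan is to view the Lagrangian formulation as a second-order ODE $\varphi_{tt}=F(\varphi,\varphi_t,\rho_0)$ on the configuration space $\Ds^s(\R)$, with $\rho_0 \in H^{s-1}(\R)$ playing the role of a fixed parameter, and to solve it by the Picard-Lindel\"of theorem. Writing $v:=\varphi_t$, this becomes the first-order system $\partial_t(\varphi,v)=(v,F(\varphi,v,\rho_0))$ on the open subset $\Ds^s(\R)\times H^s(\R)$ of the Banach space $H^s(\R)\times H^s(\R)$, with initial data $(\varphi(0),v(0))=(\text{id},u_0)$. By Proposition \ref{prop_analytic} the right-hand side is analytic in $(\varphi,v)$, hence locally Lipschitz, so Picard-Lindel\"of produces a $T>0$ and a unique local solution with $\varphi-\text{id}\in C^1([0,T];H^s(\R))$ and $\varphi_t\in C^1([0,T];H^s(\R))$; in particular $\varphi_{tt}=F(\varphi,\varphi_t,\rho_0)\in C([0,T];H^s(\R))$.

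Next I would define the Eulerian variables by $u:=\varphi_t\circ\varphi^{-1}$ and $\rho:=(\rho_0/\varphi_x)\circ\varphi^{-1}$. Here $\rho_0/\varphi_x\in H^{s-1}(\R)$ is well defined because $\varphi_x>0$ and $\varphi_x-1\in H^{s-1}(\R)$ decays at infinity, so $\varphi_x$ is bounded away from zero. The spatial regularity of $u$ and $\rho$ then follows from the composition and inversion properties of $\Ds^s(\R)$ established in \cite{composition}: inversion is continuous on $\Ds^s(\R)$ and composition maps $H^s(\R)\times\Ds^s(\R)$ continuously into $H^s(\R)$, which gives $u\in C([0,T];H^s(\R))$ and $\rho\in C([0,T];H^{s-1}(\R))$.

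The main obstacle is the temporal regularity, where the interaction of time differentiation with the composition by $\varphi^{-1}$ costs exactly one Sobolev derivative -- the mechanism that forces the solution into the lower-regularity spaces in the statement. Rather than differentiate $\varphi^{-1}$ directly, I would use that $\varphi_{tt}=(u_t+uu_x)\circ\varphi$ by construction, so that $u_t=\varphi_{tt}\circ\varphi^{-1}-uu_x$; since $\varphi_{tt}\circ\varphi^{-1}\in H^s(\R)$ while $uu_x\in H^{s-1}(\R)$ (a product of an $H^s$ and an $H^{s-1}$ function, with $s-1>1/2$), this yields $u\in C^1([0,T];H^{s-1}(\R))$. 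For $\rho$ I would differentiate the pointwise identity $\varphi_x\cdot(\rho\circ\varphi)=\rho_0$ in time; expanding $\tfrac{d}{dt}(\varphi_x\,\rho\circ\varphi)=0$ and using $\varphi_t=u\circ\varphi$ gives $\rho_t+(u\rho)_x=0$ after composing with $\varphi^{-1}$, and since $u\rho\in H^{s-1}(\R)$ we get $\rho_t=-(u\rho)_x\in H^{s-2}(\R)$, hence $\rho\in C^1([0,T];H^{s-2}(\R))$.

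Finally I would confirm that $(u,\rho)$ solves \eqref{nonlocal_eq}. The transport equation is exactly the identity just derived. For the first equation, composing $\varphi_{tt}=F(\varphi,\varphi_t,\rho_0)$ with $\varphi^{-1}$ and substituting $u=\varphi_t\circ\varphi^{-1}$, $\rho=(\rho_0/\varphi_x)\circ\varphi^{-1}$ reproduces \eqref{b_nonlocal} term by term, using $\partial_x(w\circ\varphi^{-1})\circ\varphi=w_x/\varphi_x$ as in the excerpt. Uniqueness follows from the uniqueness part of Picard-Lindel\"of together with the bijective correspondence between a solution $(u,\rho)$ and its flow map $\varphi$, supplied by the result of \cite{lagrangian} quoted above.
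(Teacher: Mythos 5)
Your proposal is correct and follows essentially the same route as the paper: solve the analytic second-order ODE $\varphi_{tt}=F(\varphi,\varphi_t,\rho_0)$ on $\Ds^s(\R)$ by Picard--Lindel\"of, then pass to Eulerian variables via $u=\varphi_t\circ\varphi^{-1}$ and $\rho=(\rho_0/\varphi_x)\circ\varphi^{-1}$. The extra detail you supply on the temporal regularity (the one-derivative loss from differentiating through the composition) is exactly what the paper delegates to the cited regularity results of \cite{composition}, and the closing remark on uniqueness is not needed for this lemma (it is the content of the subsequent one).
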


\begin{proof}
Consider the analytic second order ODE on $\Ds^s(\R)$
	\begin{equation}\label{second_order}
	\varphi_{tt}=F(\varphi,\varphi_t,\rho_0),\; \varphi(0)=\text{id},\; \varphi_t(0)=u_0.
	\end{equation}
	By Picard-Lindel\"of there is $T > 0$ and a solution $\varphi \in \Ds^s(\R)$ to \eqref{second_order} on $[0,T]$. For $t \in [0,T]$ we define
\[
	u(t):=\varphi_t(t) \circ \varphi(t)^{-1},\quad \rho(t):=\left(\frac{\rho_0}{\varphi_x(t)}\right) \circ \varphi(t)^{-1}.
\]
	From the regularity properties of composition established in \cite{composition} we know
\[
		(u,\rho) \in C([0,T];H^s(\R) \times H^{s-1}(\R)) \cap C^1([0,T];H^{s-1}(\R) \times H^{s-2}(\R))
\]
	and from the derivation above we see that $(u,\rho)$ solves \eqref{nonlocal_eq}.
\end{proof}

By the Picard-Lindel\"of Theorem we also get uniqueness.

\begin{Lemma}\label{uniqueness}
	Let $s > 3/2$ and $T > 0$. Suppose that 
	\[
		(u,\rho),(\tilde u,\tilde \rho) \in C([0,T];H^s(\R) \times H^{s-1}(\R)) \cap C^1([0,T];H^{s-1}(\R) \times H^{s-2}(\R))
	\]
	are two solutions to \eqref{nonlocal_eq}. Then $(u,\rho)=(\tilde u,\tilde \rho)$ on $[0,T]$.
\end{Lemma}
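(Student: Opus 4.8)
The plan is to reduce uniqueness for \eqref{nonlocal_eq} to uniqueness for the second order ODE \eqref{second_order} on $\Ds^s(\R)$, where it is immediate from the Picard-Lindel\"of Theorem: by Proposition \ref{prop_analytic} the vector field $F$ is analytic and hence locally Lipschitz. To each solution $(u,\rho)$ I would associate its flow map. Since $u \in C([0,T];H^s(\R))$, the result of \cite{lagrangian} yields a unique $\varphi \in C^1([0,T];\Ds^s(\R))$ with $\varphi_t=u\circ\varphi$ and $\varphi(0)=\text{id}$; the same construction applied to $(\tilde u,\tilde\rho)$ produces $\tilde\varphi$.

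Next I would verify that $\varphi$ solves \eqref{second_order}. First I recover the composite expression for $\rho$ from the transport equation by setting $g(t):=\varphi_x(t)\cdot(\rho(t)\circ\varphi(t))$ and differentiating in $t$. Using $\varphi_{tx}=(u_x\circ\varphi)\varphi_x$ and $\tfrac{d}{dt}(\rho\circ\varphi)=(\rho_t\circ\varphi)+(\rho_x\circ\varphi)(u\circ\varphi)$, the second equation of \eqref{nonlocal_eq} collapses everything to $g_t=\varphi_x\cdot\bigl((\rho_t+(\rho u)_x)\circ\varphi\bigr)=0$. Since $\varphi(0)=\text{id}$ gives $g(0)=\rho_0$, this yields $\rho=(\rho_0/\varphi_x)\circ\varphi^{-1}$. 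Differentiating $\varphi_t=u\circ\varphi$ once more in time gives $\varphi_{tt}=(u_t+uu_x)\circ\varphi$, and substituting the first equation of \eqref{nonlocal_eq} together with the formula just obtained for $\rho$ reproduces precisely $F(\varphi,\varphi_t,\rho_0)$. Thus $\varphi$ solves \eqref{second_order} with data $\varphi(0)=\text{id}$, $\varphi_t(0)=u_0$, and likewise $\tilde\varphi$.

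Finally I would invoke uniqueness. Both solutions share the initial data $u_0,\rho_0$ prescribed in \eqref{nonlocal_eq}, so $\varphi$ and $\tilde\varphi$ solve \eqref{second_order} with identical initial conditions. Picard-Lindel\"of then forces $\varphi=\tilde\varphi$ on $[0,T]$, whence $u=\varphi_t\circ\varphi^{-1}=\tilde u$ and $\rho=(\rho_0/\varphi_x)\circ\varphi^{-1}=\tilde\rho$, which is the claim.

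The main obstacle I anticipate is regularity rather than the formal algebra. One must justify that the flow map of a solution is genuinely twice differentiable in time with values in the Sobolev scale on which $F$ is defined, and that the pointwise transport computation for $g$ is legitimate when $\rho_x$ only lies in $H^{s-2}(\R)$ for $s$ near $3/2$. Both points are controlled by the mapping and composition properties established in \cite{composition}, which ensure that $u\circ\varphi$ and the conjugations by $\varphi^{-1}$ appearing above are well behaved in the stated spaces, so that the identification of $\varphi_{tt}$ with $F(\varphi,\varphi_t,\rho_0)$ holds rigorously.
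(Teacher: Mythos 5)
Your proposal is correct and follows essentially the same route as the paper: pass to the flow map $\varphi$ via the result of \cite{lagrangian}, show it solves the second order ODE \eqref{second_order}, and conclude by Picard--Lindel\"of uniqueness. In fact you are slightly more explicit than the paper on two points it leaves implicit, namely the verification that $\rho=(\rho_0/\varphi_x)\circ\varphi^{-1}$ via the conserved quantity $\varphi_x\cdot(\rho\circ\varphi)$, and the final deduction that $\rho=\tilde\rho$ as well as $u=\tilde u$.
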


\begin{proof}
We know by \cite{lagrangian} that there is a unique $\varphi \in C^1([0,T];\Ds^s(\R))$ s.t.
	\[
		\varphi_t(t)=u(t) \circ \varphi(t),\; t \in [0,T],\quad \varphi(0)=\text{id}.
		\]
Taking the derivative in $\varphi_t=u \circ \varphi$ w.r.t. $t$ we get pointwise the identity
	\[
		\varphi_{tt} = (u_t + uu_x) \circ \varphi.
	\]
By using \eqref{nonlocal_eq} we get pointwise
	\[
		\varphi_{tt} = (1-\partial_x^2)^{-1} \left(-b uu_x+(b-3)u_x u_{xx} + \rho \rho_x \right) \circ \varphi.
	\]
	The right hand side is in $C([0,T];H^s(\R))$. This means that $\varphi \in C^2([0,T];\Ds^s(\R))$ and it solves \eqref{second_order}. The solution $(\tilde u,\tilde \rho)$ generates in a similar fashion a $\tilde \varphi \in C^2([0,T];\Ds^s(\R))$ solving \eqref{second_order}. By Picard-Lindel\"of we get $\varphi=\tilde \varphi$ on $[0,T]$. Therefore we have $u=\tilde u$ on $[0,T]$. This proves uniqueness.
\end{proof}

Combining Lemma \ref{local_existence} and \ref{uniqueness} we can prove Theorem \ref{th_lwp}.

\begin{proof}[Proof of Theorem \ref{th_lwp}]
	By solving \eqref{second_order} as in Lemma \ref{local_existence} we get for $T > 0$ an open set $U_T \subset H^s(\R) \times H^{s-1}(\R)$ as in the statement of the theorem. The solution map is given by
	\begin{align*}
		\Phi_T:U_T &\to H^s(\R) \times H^{s-1}(\R),\\
		(u_0,\rho_0) &\mapsto (\varphi_t(T) \circ \varphi(T)^{-1},\left(\frac{\rho_0}{\varphi_x(T)}\right) \circ \varphi(T)^{-1}),
	\end{align*}
	where $\varphi=\varphi(\cdot;u_0,\rho_0) \in C^2([0,T];\Ds^s(\R))$ is the solution of \eqref{second_order}. We get from the regularity of the composition that this is continuous. Together with the uniqueness of Lemma \ref{uniqueness} this finishes the proof.
\end{proof}

\section{Nonuniform dependence}

In this section we will prove Theorem \ref{th_nonuniform}. Throughout this section we assume $s > 3/2$. Note that \eqref{b_eq} admits the scale invariance
\[
	u_\lambda(t,x):=\lambda u(\lambda t,x),\quad \rho_\lambda(t,x):=\lambda \rho(\lambda t,x),\quad \lambda > 0,
\]
in the sense that $(u_\lambda,\rho_\lambda)$ is a solution to \eqref{b_eq} whenever $(u,\rho)$ is. This scaling shows that $U_T \subset H^s(\R) \times H^{s-1}(\R)$ is star shaped w.r.t. $(0,0) \in H^s(\R) \times H^{s-1}(\R)$. Let us denote $U=\left. U_T \right|_{T=1}$ and $\Phi=\left.\Phi_T\right|_{T=1}$. The scaling $(u_\lambda,\rho_\lambda)$ implies for $T > 0$ and $(u_0,\rho_0) \in U_T$ that $(T u_0,T \rho_0) \in U$ and for the solution map that
\begin{equation}\label{solution_map}
	\Phi_T((u_0,\rho_0))=(u(T),\rho(T))=(\frac{1}{T} u_T(1),\frac{1}{T}\rho_T(1))=\frac{1}{T} \Phi(T u_0,T \rho_0)
\end{equation}
for $(u_0,\rho_0) \in U_T$. Using \eqref{solution_map} Theorem \eqref{th_nonuniform} will follow from 
\begin{Prop}\label{prop_nonuniform}
The map
	\[
		\Phi:U \subset H^s(\R) \times H^{s-1}(\R) \to H^s(\R) \times H^{s-1}(\R)
	\]
is nowhere uniformly continuous.
\end{Prop}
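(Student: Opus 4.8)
The plan is to prove the statement directly from its definition: fix an arbitrary centre $p_0=(u_0,\rho_0)\in U$ and a radius $r>0$, and construct two sequences $p_n,q_n\in B_r(p_0)\cap U$ with $\|p_n-q_n\|\to 0$ while $\|\Phi(p_n)-\Phi(q_n)\|\ge\epsilon_0$ for some $\epsilon_0=\epsilon_0(r)>0$; this shows $\Phi$ is not uniformly continuous on $B_r(p_0)$, and since $p_0,r$ are arbitrary, $\Phi$ is nowhere uniformly continuous. The structural input is the factorization from Section 2: writing $\varphi=\varphi(\cdot;u_0,\rho_0)\in C^2([0,1];\Ds^s(\R))$ for the solution of \eqref{second_order}, we have
\[
\Phi(u_0,\rho_0)=\Big(\varphi_t(1)\circ\varphi(1)^{-1},\ \big(\tfrac{\rho_0}{\varphi_x(1)}\big)\circ\varphi(1)^{-1}\Big).
\]
By Proposition \ref{prop_analytic} and Picard--Lindel\"of the data-to-flow map $(u_0,\rho_0)\mapsto(\varphi(1),\varphi_t(1))$ is analytic, hence locally Lipschitz into $\Ds^s(\R)\times H^s(\R)$; inversion on $\Ds^s(\R)$ is likewise locally Lipschitz by \cite{composition}. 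Thus the only ingredient of $\Phi$ that can destroy uniform continuity is the terminal right composition with $\varphi(1)^{-1}$, and the whole proof amounts to realizing the failure of $(w,\psi)\mapsto w\circ\psi$ to be uniformly continuous on $H^s(\R)\times\Ds^s(\R)$ through genuine solutions.

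The construction is a moving hump sent to spatial infinity. Fix smooth compactly supported bumps $\omega,\chi$ and a sequence $x_n\to+\infty$, and perturb only the first component, keeping $\rho_0$ fixed. Set
\[
u_0^{(n)}=u_0+b_n\,\omega(\cdot-x_n)+r\,n^{-s}\cos\!\big(n(\cdot-x_n)\big)\chi(\cdot-x_n),
\]
and let $\tilde u_0^{(n)}$ be the same expression with $b_n$ replaced by $\tilde b_n$, where $b_n,\tilde b_n\lesssim r$ and $b_n-\tilde b_n\sim r/n$. Then $p_n=(u_0^{(n)},\rho_0)$ and $q_n=(\tilde u_0^{(n)},\rho_0)$ lie in $B_r(p_0)$ for $r$ small, the high-frequency packet contributes $\|r n^{-s}\cos(n\cdot)\chi\|_{H^s}\sim r$ so we remain in the ball, and the two sequences differ only through the low-frequency amplitude, $\|p_n-q_n\|\sim|b_n-\tilde b_n|\,\|\omega\|_{H^s}\sim r/n\to 0$. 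Pushing the hump to $x_n\to\infty$, where $u_0,\rho_0$ and the associated flow are uniformly small by Sobolev decay, makes the local transport essentially independent of the arbitrary background, so that the packet centre is advected at speed $\approx b_n\omega(0)$, differing between the two solutions by $\approx(b_n-\tilde b_n)\omega(0)\sim r/n$ over the unit time interval.

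For the output separation I would compute in the first component via the factorization. The data-to-flow Lipschitz bound gives $\|\varphi(1)-\tilde\varphi(1)\|_{H^s}+\|\varphi_t(1)-\tilde\varphi_t(1)\|_{H^s}\lesssim\|p_n-q_n\|\sim r/n$, so $\psi_i:=\varphi_i(1)^{-1}$ satisfy $\|\psi_1-\psi_2\|_{H^s}\sim r/n$. Writing $w=\tilde\varphi_t(1)$, the first component of $\Phi(p_n)-\Phi(q_n)$ equals $[\varphi_t(1)-\tilde\varphi_t(1)]\circ\psi_1+\big(w\circ\psi_1-w\circ\psi_2\big)$; the first summand is $O(r/n)$, while $w$ carries the transported high-frequency packet with $\|w\|_{H^{s+1}}\sim r n$. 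Since $\psi_1-\psi_2$ is low-frequency of size $r/n$, the difference $w\circ\psi_1-w\circ\psi_2\approx w'\,(\psi_1-\psi_2)$ is concentrated at frequency $n$ with amplitude $\sim(r n^{-s}\cdot n)(r/n)=r^2 n^{-s}$, whence $\|w\circ\psi_1-w\circ\psi_2\|_{H^s}\sim r^2 n^{-s}\cdot n^{s}=r^2$. Thus $\|\Phi(p_n)-\Phi(q_n)\|\gtrsim r^2-O(r/n)\ge\epsilon_0(r)>0$ for large $n$. The same mechanism applies to the second component through $\rho(1)=(\rho_0/\varphi_x(1))\circ\varphi(1)^{-1}$, loading the high frequency into $\rho_0\in H^{s-1}$ and landing the loss-of-derivative estimate in $H^{s-1}$.

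The \emph{main obstacle} is the lower bound, i.e.\ proving that the outputs genuinely stay apart rather than merely admitting the heuristic estimates above. Two points require care. First, one must show that the true flows advect the packet centre with the anticipated speed difference $\sim r/n$ \emph{from below}, and that the frequency-$n$ oscillation is not smeared out over $[0,1]$; the device of placing the hump at $x_n\to\infty$ is precisely what reduces this to the near-zero-background regime, where the transport is computable and a lower bound on $\|\psi_1-\psi_2\|_{H^s}$ near the packet can be established uniformly in the centre $p_0$. Second, one must make the loss-of-derivative estimate rigorous, controlling the remainder in $w\circ\psi_1-w\circ\psi_2=w'(\psi_1-\psi_2)+\text{(error)}$ and showing the error cannot cancel the principal $r^2$ term; this is the quantitative counterpart of the non-uniform continuity of composition from \cite{composition}. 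By contrast, the upper bound $\|p_n-q_n\|\to0$ and the Lipschitz control of the flow data are routine.
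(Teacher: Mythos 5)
Your route is genuinely different from the paper's. You perturb only the $u$--component by a frequency--$n$ wave packet of amplitude $n^{-s}$ plus a low--frequency bump whose amplitude differs by $\sim r/n$ between the two sequences, place everything at $x_n\to\infty$ to neutralize the arbitrary background, and extract the output separation from the derivative loss in the terminal composition (an $O(1)$ phase shift of the packet). The paper does none of this: it perturbs the $\rho$--component by a hump $\rho_n$ of \emph{fixed} $H^{s-1}$--norm $R/4$ whose support shrinks like $1/n$ around a point $a_\bullet$ away from $\operatorname{supp}\rho_\bullet$, perturbs $u$ by $\frac1n w_1$, and shows via a Taylor expansion of $\Psi$ that the two flows carry the hump to \emph{disjoint} intervals; the lower bound then comes from support separation together with \eqref{R2}, with no high--frequency analysis at all. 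Crucially, the pointwise lower bound $|\tilde\varphi_n(a_\bullet)-\varphi_n(a_\bullet)|\gtrsim 1/n$ is secured by Lemma \ref{lemma_dense}: the quantity $\bigl(d_{(u_\bullet,\rho_\bullet)}\Psi(w_\bullet)\bigr)(a_\bullet)$ can perfectly well vanish at a given base point, so the paper uses analyticity of $t\mapsto\bigl(d_{t(u_\bullet,\rho_\bullet)}\Psi(w_\bullet)\bigr)(a_\bullet)$ together with $d_{(0,0)}\Psi(v)=v_1$ to get nonvanishing only on a \emph{dense} set of base points --- which suffices for ``nowhere'' since every ball contains a smaller ball centred at such a point.

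This identifies the decisive gap in your proposal: the lower bound on the flow separation at the packet, $|\varphi_n(1)(y)-\tilde\varphi_n(1)(y)|\ge c\,r/n$ near $x_n$, claimed \emph{at every} base point $(u_0,\rho_0)$. Your hump--at--infinity device is a proposed substitute for Lemma \ref{lemma_dense}; it is plausible (the kernel of $(1-\partial_x^2)^{-1}$ decays exponentially and $H^s$ data decay at infinity), but the required estimate is that the advection--speed difference equals $(b_n-\tilde b_n)\,\omega(0)$ up to errors $o(r/n)$ uniformly over the background and over $t\in[0,1]$, and this is exactly where all the work lies; it is asserted, not proved. The same applies to the persistence of the frequency--$n$ oscillation of amplitude $\gtrsim rn^{-s}$ in the solution at time $1$ and to the control of the remainder in $w\circ\psi_1-w\circ\psi_2=w'\,(\psi_1-\psi_2)+\text{error}$. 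There is also a concrete misstatement: inversion $\varphi\mapsto\varphi^{-1}$ is \emph{not} locally Lipschitz on $\Ds^s(\R)$ (it is continuous, and Lipschitz only at the cost of one derivative), so the step ``$\|\psi_1-\psi_2\|_{H^s}\sim r/n$'' is unjustified --- all the more so here, where $\varphi_i(1)$ has $H^{s+1}$--norm growing like $n$. What you actually need, and what is available, is only the $L^\infty$ comparison $|\psi_1(x)-\psi_2(x)|\le L\,\|\varphi_1-\varphi_2\|_{L^\infty}$ coming from the bi--Lipschitz bound \eqref{lipschitz}, which is precisely how the paper argues. In short: the skeleton of your argument is a known alternative mechanism, but the two lower bounds you flag as ``obstacles'' are not technicalities --- they are the theorem, and the paper's analyticity--plus--dense--set argument and fixed--norm shrinking hump are designed specifically to avoid having to prove them in your form.
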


Let us also introduce the time $T=1$ solution map in Lagrangian coordinates, i.e.
\[
	\Psi:U \subset H^s(\R) \times H^{s-1}(\R) \to \Ds^s(\R), (u_0,\rho_0) \mapsto \varphi(1;u_0,\rho_0),
\]
where $\varphi(1;u_0,\rho_0)$ is the time $T=1$ value of the solution to \eqref{second_order} with initial values $(u_0,\rho_0)$. We know by analytic dependence on the initial data that $\Psi$ is an analytic map. Moreover a simple computation shows that
\begin{equation}\label{exp}
	\varphi(t;u_0,\rho_0)=\Psi(t(u_0,\rho_0)).
\end{equation}
Before we prove Proposition \ref{prop_nonuniform} we need the following technical lemma about the map $\Psi$

\begin{Lemma}\label{lemma_dense}
	There is a dense subset $S \subset U$ consisting of smooth compactly supported $(u_\bullet,\rho_\bullet)$ s.t. for every $(u_\bullet,\rho_\bullet) \in S$ there is $w_\bullet=(w_1,0) \in H^s(\R) \times H^{s-1}(\R)$ and $a_\bullet \in \R$ with $\operatorname{dist}(a_\bullet,\operatorname{supp}\rho_\bullet) \geq 2$ satisfying
\[
	\left(d_{(u_\bullet,\rho_\bullet)} \Psi(w_\bullet)\right)(a_\bullet) \neq 0.
\]
	Here $\operatorname{supp} \rho_\bullet \subset \R$ is the support of $\rho_\bullet$, $\operatorname{dist}(a_\bullet,\operatorname{supp}\rho_\bullet)$ is the distance of $a_\bullet$ to the support of $\rho_\bullet$ and $d_{(u_\bullet,\rho_\bullet)} \Psi$ is the differential of $\Psi$ at $(u_\bullet,\rho_\bullet)$, i.e
\begin{align*}
	&d_{(u_\bullet,\rho_\bullet)} \Psi:H^s(\R) \times H^{s-1}(\R) \to H^s(\R),\\
	&v=(v_1,v_2) \mapsto d_{(u_0,\rho_0)} \Psi(v)=\lim_{t \to 0} \frac{\Psi(u_\bullet+tv_1,\rho_\bullet+tv_2)-\Psi(u_\bullet,\rho_\bullet)}{t}.
\end{align*}
\end{Lemma}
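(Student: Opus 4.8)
The plan is to reduce the statement to a single non-vanishing property of the derivative map $d\Psi$ and then to propagate that property from the zero datum to a dense family of initial values by a one-variable analyticity argument along the rays of the star-shaped set $U$. Recall that $d_{(u_0,\rho_0)}\Psi(w_1,0)=\eta(1)$, where $\eta$ is the variation of $\varphi$ solving the linearization of \eqref{second_order} with $\eta(0)=0$, $\eta_t(0)=w_1$; the vanishing of the second component of the direction means the $\rho_0$-linearization of $F$ never enters. So the task is to find, near every point of $U$, a base datum $(u_\bullet,\rho_\bullet)$, a direction $(\chi,0)$, and a point $a_\bullet$ far from $\operatorname{supp}\rho_\bullet$ at which $\eta(1)$ does not vanish.

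First I would fix a target. Since $C_c^\infty(\R)\times C_c^\infty(\R)$ is dense in $H^s(\R)\times H^{s-1}(\R)$ and $U$ is open, the smooth compactly supported pairs $(f,g)\in U$ are dense in $U$, so it suffices to produce a suitable $(u_\bullet,\rho_\bullet)$ arbitrarily close to each such $(f,g)$. Given $(f,g)$, I would choose $a_\bullet\in\R$ to the right of $\operatorname{supp} g$ with $\operatorname{dist}(a_\bullet,\operatorname{supp} g)\geq 2$ and a bump $\chi\in C_c^\infty(\R)$ localized near $a_\bullet$ with $\chi(a_\bullet)\neq 0$; the candidate direction is $w_\bullet=(\chi,0)$. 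Since $s>3/2$, evaluation at $a_\bullet$ is a bounded linear functional on $H^s(\R)$, so $(d_{(u_0,\rho_0)}\Psi(\chi,0))(a_\bullet)$ is a well-defined real number depending on the base point.

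The anchoring computation is at the origin. At $(u_0,\rho_0)=(0,0)$ the solution of \eqref{second_order} is $\varphi\equiv\text{id}$, and because every term building $F$ is quadratic in $(\varphi_t,\rho_0)$, the full linearization of $F$ at $(\text{id},0,0)$ vanishes for all $t$. Hence the variational equation governing $\eta(t)$ reduces to $\eta_{tt}\equiv 0$ with $\eta(0)=0$, $\eta_t(0)=\chi$, giving $\eta(t)=t\chi$ and $d_{(0,0)}\Psi(\chi,0)=\eta(1)=\chi$; in particular $\big(d_{(0,0)}\Psi(\chi,0)\big)(a_\bullet)=\chi(a_\bullet)\neq 0$. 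Next I would exploit analyticity along the ray. Because $U$ is star-shaped about $(0,0)$ and open, the affine map $\tau\mapsto\tau(f,g)$ takes a neighborhood of $[0,1]$ into $U$; since $\Psi$ is analytic, so is its derivative map $D\Psi$, and composing with this affine map, with the bounded application to $(\chi,0)$, and with evaluation at $a_\bullet$, the function
\[
  p(\tau):=\big(d_{\tau(f,g)}\Psi(\chi,0)\big)(a_\bullet)
\]
is real-analytic near $[0,1]$. As $p(0)=\chi(a_\bullet)\neq 0$, $p$ is not identically zero, so its zeros are isolated; choosing $\tau<1$ close to $1$ with $p(\tau)\neq 0$ and setting $(u_\bullet,\rho_\bullet):=\tau(f,g)$, the point is smooth and compactly supported, lies within any prescribed distance of $(f,g)$, satisfies $\operatorname{supp}\rho_\bullet=\operatorname{supp} g$ (so $\operatorname{dist}(a_\bullet,\operatorname{supp}\rho_\bullet)\geq 2$), and meets the conclusion. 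Letting $(f,g)$ range over the dense smooth compactly supported data yields the dense set $S$.

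The routine pieces are the density of smooth compactly supported data, the support bookkeeping (scaling by $\tau$ does not enlarge $\operatorname{supp}\rho_\bullet$, so the distance-$2$ condition is free), and the analyticity of $p$, which follows from analyticity of $\Psi$ together with continuity of the two evaluations. The main obstacle, and the real content, is the origin computation: one must verify that the linearization of the Lagrangian vector field $F$ at the zero solution is trivial, so that $d_{(0,0)}\Psi(\chi,0)=\chi$. This is precisely what makes $p$ a non-trivial real-analytic function and allows the isolated-zeros principle to upgrade a single good base point into a dense family.
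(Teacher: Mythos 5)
Your proof is correct and follows essentially the same route as the paper: anchor the non-vanishing at $(0,0)$, then propagate it along the ray $\tau\mapsto\tau(f,g)$ by one-variable analyticity of $\tau\mapsto\bigl(d_{\tau(f,g)}\Psi(\chi,0)\bigr)(a_\bullet)$ and the isolated-zeros principle, with the support/distance bookkeeping free under scaling. The only (harmless) difference is the anchor computation: you derive $d_{(0,0)}\Psi(\chi,0)=\chi$ from the variational equation and the quadratic structure of $F$, whereas the paper reads it off directly from the scaling identity $\varphi(t;u_0,\rho_0)=\Psi(t(u_0,\rho_0))$.
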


\begin{proof}
	For $v=(v_1,v_2) \in H^s(\R) \times H^{s-1}(\R)$ we get from \eqref{exp}  
\[
	d_{(0,0)}\Psi(v)=\left.\frac{d}{dt}\right|_{t=0} \Psi(tv)=\left.\frac{d}{dt}\right|_{t=0} \varphi(t;v_1,v_2)=v_1.
\]
	We know that $C_c^\infty(\R) \times C_c^\infty(\R)$ is dense in $U$. Let $(u_\bullet,\rho_\bullet) \in C_c^\infty(\R) \times C_c^\infty(\R)$ be arbitrary. We take $a_\bullet \in \R$ with $\operatorname{dist}(a_\bullet,\operatorname{supp}\rho_\bullet) \geq 2$ and choose $w_\bullet=(w_1,0) \in H^s(\R) \times H^{s-1}(\R)$ s.t. $w_1(a_\bullet) \neq 0$. Now consider the analytic function
\[
	[0,1] \to \R,\quad t \mapsto \left(d_{t(u_\bullet,\rho_\bullet)}\Psi(w_\bullet)\right)(a_\bullet),
\]
	which for $t=0$ is equal to $w_1(a_\bullet) \neq 0$. Since the function is analytic this means that there is a sequence $t_n \uparrow 1$ s.t.
\[
\left(d_{t_n(u_\bullet,\rho_\bullet)}\Psi(w_\bullet)\right)(a_\bullet) \neq 0,\quad n \geq 1.
\]
So we can put $\{t_n (u_\bullet,\rho_\bullet) \;|\; n \geq 1\}$ into $S$. By doing this for all $(u_\bullet,\rho_\bullet) \in C_c^\infty(\R) \times C_c^\infty(\R)$ we get a dense $S \subset U$ with the desired properties.
\end{proof}

Now we can prove Proposition \ref{prop_nonuniform}.

\begin{proof}[Proof of Proposition \ref{prop_nonuniform}]
	Let $S \subset U$ be as in Lemma \ref{lemma_dense} and $(u_\bullet,\rho_\bullet) \in S$ with corresponding $w_\bullet=(w_1,0) \in H^s(\R) \times H^{s-1}(\R)$ and $a_\bullet \in \R$ satisfying $\operatorname{dist}(a_\bullet,\operatorname{supp}\rho_\bullet)\geq 2$ and $\left(d_{(u_\bullet,\rho_\bullet)}\Psi(w_\bullet)\right)(a_\bullet) \neq 0$. We fix $m > 0$ s.t.
	\[
		|\left(d_{(u_\bullet,\rho_\bullet)}\Psi(w_\bullet)\right)(a_\bullet)| > m \|w_1\|_{H^s}.
	\]
We will determine in succesive steps $R_\ast > 0$ with $B_{R_\ast}((u_\bullet,\rho_\bullet)) \subset U$, where
	\begin{align*}
		&B_{R_\ast}((u_\bullet,\rho_\bullet))=\\
		&\{(u_0,\rho_0) \in H^s(\R) \times H^{s-1}(\R) \;|\; \max\left\{\|u_0-u_\bullet\|_{H^s},\|\rho_0-\rho_\bullet\|_{H^{s-1}}\right\} < R_\ast \},
	\end{align*}
	s.t. $\left. \Phi \right|_{B_R((u_\bullet,\rho_\bullet))}$ is not uniformly continuous for all $0 < R \leq R_\ast$.\\
	The solution map $\Phi=(\Phi_1,\Phi_2)$ has two components, a $u$ and a $\rho$ part. It is sufficient to prove the non uniform dependence for the $\rho$ part
\[
	\Phi_2(u_0,\rho_0)=\left(\frac{\rho_0}{\partial_x \Psi(u_0,\rho_0)}\right) \circ \Psi(u_0,\rho_0)^{-1},\;(u_0,\rho_0) \in U.
\]
	We first choose $R_1 > 0$ with $B_{R_1}((u_\bullet,\rho_\bullet)) \subset U$ and
	\begin{equation}\label{R1}
		\|\Psi(u_0,\rho_0)-\text{id}\|_{H^s},\|\Psi(u_0,\rho_0)^{-1}-\text{id}\|_{H^s}< C_1 
	\end{equation}
for all $(u_0,\rho_0) \in B_{R_1}((u_\bullet,\rho_\bullet))$ and for some $C_1 > 0$. This is clearly possible due to the continuity of $\Psi$. The map
	\[
		H^{s-1}(\R) \times \Ds^s(\R) \to H^{s-1}(\R),\quad (f,\varphi) \mapsto \left(\frac{f}{\varphi_x}\right) \circ \varphi^{-1}
	\]
is continuous as we know from \cite{composition}. By the uniform boundedness principle there is $0 < R_2 \leq R_1$ and $C_2 > 0$ s.t.
	\begin{equation}\label{R2}
		\frac{1}{C_2} \|f\|_{H^{s-1}} \leq \left\| \left(\frac{f}{\partial_x \Psi(u_0,\rho_0)}\right) \circ \Psi(u_0,\rho_0)^{-1} \right\|_{H^{s-1}} \leq C_2 \|f\|_{H^{s-1}}, 
	\end{equation}
for all $f \in H^{s-1}(\R)$ and for all $(u_0,\rho_0) \in B_{R_2}((u_\bullet,\rho_\bullet))$. By Taylor's Theorem we have
	\begin{align*}
		&\Psi_{(u_\bullet+v,\rho_\bullet+w)}=\\
		&\Psi_{(u_\bullet,\rho_\bullet)}+d_{(u_\bullet,\rho_\bullet)}\Psi(v,w)+\int_0^1 (1-s) d_{(u_\bullet+s v,\rho_\bullet+s w)}^2 \Psi \left((v,w),(v,w)\right) \;ds.
	\end{align*}
We will need estimates for the second order differential $d^2\Psi$. Since $\Psi$ is smooth there is $0 < R_3 \leq R_2$ and $C_3 > 0$ s.t.
	\begin{equation}\label{R3a}
		\|d^2_{(u_0,\rho_0)} \Psi((v,w),(\tilde v,\tilde w))\|_{H^s} \leq C_3 (\|v\|_{H^s}+\|w\|_{H^{s-1}})(\|\tilde v\|_{H^s}+\|\tilde w\|_{H^{s-1}})
	\end{equation}
and
	\begin{align}
		\nonumber
		&\|d^2_{(u_0,\rho_0)} \Psi(v,w)^2-d^2_{(\tilde u_0,\tilde \rho_0)}\Psi(v,w)^2\|_{H^s}\\
		\label{R3b}
		&\leq C_3 (\|u_0-\tilde u_0\|_{H^s}+\|\rho_0-\tilde \rho_0\|_{H^{s-1}}) (\|v\|_{H^s}+\|w\|_{H^{s-1}})^2
	\end{align}
	for all $(u_0,\rho_0),(\tilde u_0,\tilde \rho_0) \in B_{R_3}((u_\bullet,\rho_\bullet))$ and for all $(v,w),(\tilde v,\tilde w) \in H^s(\R) \times H^{s-1}(\R)$. Here we used $(u,v)^2=\left((u,v),(u,v)\right)$. By the smoothness of $\Psi$ there is $0 < R_4 \leq R_3$ and $C_4 > 0$ s.t.
	\begin{equation}\label{local_lipschitz}
		\|\Psi(u_0,\rho_0)-\Psi(\tilde u_0,\tilde \rho_0)\|_{H^s} \leq C_4 (\|u_0-\tilde u_0\|_{H^s}+\|\rho_0-\tilde \rho_0\|_{H^{s-1}})
	\end{equation}
for all $(u_0,\rho_0),(\tilde u_0,\tilde \rho_0) \in B_{R_4}((u_\bullet,\rho_\bullet))$. We also fix a constant $C_5 > 0$ for the Sobolev imbedding
	\begin{equation}\label{sobolev}
		\|f\|_{C^1} \leq C_5 \|f\|_{H^s},\quad \forall f \in H^s(\R).
	\end{equation}
After all this choices we set $0 < R_\ast \leq R_5$ in such a way that we have
	\begin{equation}\label{R_ast}
		\max\{C_3 C_5 R_\ast^2/16,C_3 C_5 R_\ast/2\} < m/4.
	\end{equation}
By \eqref{R1} and the Sobolev imbedding \eqref{sobolev} there is $L > 0$ s.t.
	\begin{equation}\label{lipschitz}
		\frac{1}{L}|x-y| \leq	|\Psi(u_0,\rho_0)(x)-\Psi(u_0,\rho_0)(y)| \leq L |x-y|,\quad \forall x,y \in \R,
\end{equation}
	for all $(u_0,\rho_0) \in B_{R_\ast}((u_\bullet,\rho_\bullet))$.\\
Let $0 < R \leq R_\ast$. Our strategy is to construct two sequences of initial data $(u_0^{(n)},\rho_0^{(n)})_{n \geq 1},(\tilde u_0^{(n)},\tilde \rho_0^{(n)})_{n \geq 1} \subset B_R((u_\bullet,\rho_\bullet))$ s.t.
\[
	\lim_{n \to \infty} (\|u_0^{(n)}-\tilde u_0^{(n)}\|_{H^s}+\|\rho_0^{(n)}-\tilde \rho_0^{(n)}\|_{H^{s-1}})=0
\]
whereas
	\[
		\limsup_{n \to \infty} \|\Phi_2(u_0^{(n)},\rho_0^{(n)})-\Phi_2(\tilde u_0^{(n)},\tilde \rho_0^{(n)})\|_{H^{s-1}} > 0.
	\]
This would show that $\left. \Phi \right|_{B_R((u_\bullet,\rho_\bullet))}$ is not uniformly continuous.\\
We define 
\[
	r_n:=\frac{m}{8n} \|w_1\|_{H^s}
\]
and a sequence $(\rho_n)_{n \geq 1} \subset C_c^\infty(\R)$ with $\|\rho_n\|_{H^{s-1}}=R/4$ and
\[
	\operatorname{supp}\rho_n \subset [a_\bullet-\frac{1}{L} r_n,a_\bullet+\frac{1}{L} r_n].
\]
With this we define the pair of initial data as
\[
	u_0^{(n)}=u_\bullet,\;\rho_0^{(n)}=\rho_\bullet+\rho_n,\quad \tilde u_0^{(n)}=u_\bullet+\frac{1}{n} w_1,\;\tilde \rho_0^{(n)}=\rho_\bullet+\rho_n.
\]
There is $N \geq 1$ s.t. 
\[
	(u_0^{(n)},\rho_0^{(n)}),(\tilde u_0^{(n)},\tilde \rho_0^{(n)}) \in B_R((u_\bullet,\rho_\bullet)),\quad \forall n \geq N,
\]
and $\operatorname{supp}\rho_n \subset [a_\bullet-1,a_\bullet+1]$ for $n \geq N$. By construction we have
\[
	\lim_{n \to \infty} (\|u_0^{(n)}-\tilde u_0^{(n)}\|_{H^s}+\|\rho_0^{(n)}-\tilde \rho_0^{(n)}\|_{H^{s-1}})=0.
\]
In order to make the notation easier we introduce for $n \geq N$
\[
	\varphi_n=\Psi(u_0^{(n)},\rho_0^{(n)}),\quad \tilde \varphi_n=\Psi(\tilde u_0^{(n)},\tilde \rho_0^{(n)}).
\]
Thus we get for $n \geq N$
\[
	\Phi_2(u_0^{(n)},\rho_0^{(n)})=\left(\frac{\rho_0^{(n)}}{\partial_x \varphi_n}\right) \circ \varphi_n^{-1},\quad
	\Phi_2(\tilde u_0^{(n)},\tilde \rho_0^{(n)})=\left(\frac{\tilde \rho_0^{(n)}}{\partial_x \tilde \varphi_n}\right) \circ \tilde \varphi_n^{-1}.
\]
For the supports we get
\[
	\operatorname{supp}\left(\frac{\rho_0^{(n)}}{\partial_x \varphi_n}\right) \circ \varphi_n^{-1}=\varphi_n(\operatorname{supp} \rho_\bullet) \cup \varphi_n(\operatorname{supp}\rho_n)
\]
and
\[
	\operatorname{supp}\left(\frac{\tilde \rho_0^{(n)}}{\partial_x \varphi_n}\right) \circ \tilde \varphi_n^{-1}=\tilde \varphi_n(\operatorname{supp} \rho_\bullet) \cup \tilde \varphi_n(\operatorname{supp}\rho_n).
\]
Both are disjoint unions. But we can say more. We introduce the sets
\[
	A_n=\varphi_n(\operatorname{supp} \rho_\bullet),\;B_n=\varphi_n(\operatorname{supp}\rho_n),\;C_n=\tilde \varphi_n(\operatorname{supp} \rho_\bullet),\;D_n=\tilde \varphi_n(\operatorname{supp}\rho_n),
\]
for $n \geq N$. By \eqref{lipschitz} the sets $A_n$ and $B_n$ are separated by a distance not less than $\delta:=1/L > 0$. As 
\[
	\|u_0^{(n)}-\tilde u_0^{(n)}\|_{H^s} + \|\rho_0^{(n)}-\tilde \rho_0^{(n)}\|_{H^{s-1}} \overset{n \to \infty}{\longrightarrow} 0
\]
we get by \eqref{local_lipschitz} and \eqref{sobolev} that for $n$ large enough $A_n \cup C_n$ and $B_n \cup D_n$ are separated by at least $\delta /2$. Thus there is $C_\delta > 0$ s.t.
\begin{align*}
	&\|\Phi_2(u_0^{(n)},\rho_0^{(n)})-\Phi_2(\tilde u_0^{(n)},\tilde \rho_0^{(n)})\|_{H^{s-1}}  = \\
	&\left\| \left(\frac{\rho_0^{(n)}}{\partial_x \varphi_n}\right) \circ \varphi_n^{-1}-\left(\frac{\tilde \rho_0^{(n)}}{\partial_x \tilde \varphi_n}\right) \circ \tilde \varphi_n^{-1}\right\|_{H^{s-1}} \geq \\
	&C_\delta \left(\left\| \left(\frac{\rho_\bullet}{\partial_x \varphi_n}\right) \circ \varphi_n^{-1}-\left(\frac{\rho_\bullet}{\partial_x \tilde \varphi_n}\right) \circ \tilde \varphi_n^{-1}\right\|_{H^{s-1}} \right)+\\
	&C_\delta \left(\left\| \left(\frac{\rho_n}{\partial_x \varphi_n}\right) \circ \varphi_n^{-1}-\left(\frac{\rho_n}{\partial_x \tilde \varphi_n}\right) \circ \tilde \varphi_n^{-1}\right\|_{H^{s-1}} \right) \geq\\
	&C_\delta \left(\left\| \left(\frac{\rho_n}{\partial_x \varphi_n}\right) \circ \varphi_n^{-1}-\left(\frac{\rho_n}{\partial_x \tilde \varphi_n}\right) \circ \tilde \varphi_n^{-1}\right\|_{H^{s-1}} \right) 
\end{align*}
for $n$ large. So it is sufficient to show
\[
	\limsup_{n \to \infty} \left(\left\| \left(\frac{\rho_n}{\partial_x \varphi_n}\right) \circ \varphi_n^{-1}-\left(\frac{\rho_n}{\partial_x \tilde \varphi_n}\right) \circ \tilde \varphi_n^{-1}\right\|_{H^{s-1}} \right) > 0.
\]
We will get this by showing that $B_n$ and $D_n$ are disjoint as well. For that we look at the ``center'' of $B_n$ resp. $D_n$, more precisely at $\varphi_n(a_\ast)$ resp. $\tilde \varphi_n(a_\ast)$. By Taylor's Theorem we have for $\varphi_n=\Psi(u_\bullet,\rho_\bullet + \rho_n)$
\[
	\varphi_n = \Psi(u_\bullet,\rho_\bullet)+d_{(u_\bullet,\rho_\bullet)}\Psi(0,\rho_n)+\int_0^1 (1-s) d_{(u_\bullet,\rho_\bullet + s \rho_n)}^2\Psi (0,\rho_n)^2 \;ds
\]
and similarly for $\tilde \varphi_n=\Psi(u_\bullet+\frac{1}{n}w_1,\rho_\bullet+\rho_n)$
\begin{align*}
	\tilde \varphi_n&=\Psi(u_\bullet,\rho_\bullet)+d_{(u_\bullet,\rho_\bullet)}\Psi(\frac{1}{n} w_1,\rho_n) + \int_0^1 (1-s) d_{(u_\bullet+s \frac{1}{n}w_1,\rho_\bullet + s \rho_n)}^2\Psi (\frac{1}{n}w_1,\rho_n)^2 \;ds\\
&=\Psi(u_\bullet,\rho_\bullet)+d_{(u_\bullet,\rho_\bullet)}\Psi(\frac{1}{n} w_1,\rho_n) + \int_0^1 (1-s) d_{(u_\bullet+s \frac{1}{n}w_1,\rho_\bullet + s \rho_n)}^2\Psi (0,\rho_n)^2 \;ds\\
	&+2 \int_0^1 (1-s) d_{(u_\bullet+s \frac{1}{n}w_1,\rho_\bullet + s \rho_n)}^2\Psi \left((\frac{1}{n}w_1,0),(0,\rho_n)\right) \;ds\\ 
	&+ \int_0^1 (1-s) d_{(u_\bullet+s \frac{1}{n}w_1,\rho_\bullet + s \rho_n)}^2\Psi (\frac{1}{n}w_1,0)^2 \;ds.
\end{align*}
Thus
\begin{align*}
	\tilde \varphi_n- \varphi_n&= d_{(u_\bullet,\rho_\bullet)}\Psi(\frac{1}{n} w_1,0)\\
	&+\int_0^1 (1-s) \left(d_{(u_\bullet+s \frac{1}{n}w_1,\rho_\bullet + s \rho_n)}^2\Psi-d_{(u_\bullet,\rho_\bullet + s \rho_n)}^2\Psi\right) (0,\rho_n)^2 \;ds\\
	&+2 \int_0^1 (1-s) d_{(u_\bullet+s \frac{1}{n}w_1,\rho_\bullet + s \rho_n)}^2\Psi \left((\frac{1}{n}w_1,0),(0,\rho_n)\right) \;ds\\ 
	&+ \int_0^1 (1-s) d_{(u_\bullet+s \frac{1}{n}w_1,\rho_\bullet + s \rho_n)}^2\Psi (\frac{1}{n}w_1,0)^2 \;ds.\\
	&=d_{(u_\bullet,\rho_\bullet)}\Psi(\frac{1}{n} w_1,0)+\mathcal R_1 +\mathcal R_2 + \mathcal R_3.
\end{align*}
Using \eqref{R3b} and \eqref{sobolev} we get
\[
	\|\mathcal R_1\|_\infty \leq C_5 \|\mathcal R_1\|_{H^s} \leq C_5 C_3 \|\frac{1}{n} w_1\|_{H^s} \|\rho_n\|_{H^{s-1}}^2=C_3 C_5 \frac{R^2}{16} \frac{1}{n} \|w_1\|_{H^s}. 
\]
Using \eqref{R3a} and \eqref{sobolev} we get
\[
	\|\mathcal R_2\|_\infty \leq C_5 \|\mathcal R_2\|_{H^s} \leq 2 C_5 C_3 \|\frac{1}{n}w_1\|_{H^s} \|\rho_n\|_{H^{s-1}} = 2 C_3 C_5 \frac{R}{4} \frac{1}{n} \|w_1\|_{H^s}
\]
and
\[
	\|\mathcal R_3\|_\infty \leq C_5 \|\mathcal R_3\|_{H^s} \leq C_5 C_3 \|\frac{1}{n} w_1\|_{H^s}^2= C_3 C_5 \frac{1}{n^2} \|w_1\|_{H^s}^2.
\]
By \eqref{R_ast} we get for $n$ large
\[
	\|\mathcal R_1\|_\infty + \|\mathcal R_2\|_\infty + \|\mathcal R_3\|_\infty < \frac{m}{2n} \|w_1\|_{H^s}.
\]
Therefore 
\begin{align*}
	&|\tilde \varphi_n(a_\bullet)-\varphi_n(a_\bullet)| \geq \left|\left(d_{(u_\bullet,\rho_\bullet)}\Psi(\frac{1}{n}w_1,0)\right)(a_\bullet)\right|-\|\mathcal R_1\|_\infty - \|\mathcal R_2\|_\infty - \|\mathcal R_3\|_\infty \\
	&\geq \frac{1}{n} |\left(d_{(u_\bullet,\rho_\bullet)}\Psi(w_\bullet)\right)(a_\bullet)| -\frac{m}{2n} \|w_1\|_{H^s} = \frac{m}{2n} \|w_1\|_{H^s}
\end{align*}
for $n$ large. Since $\operatorname{supp}\rho_n \subset [a_\bullet-\frac{1}{L}r_n,a_\bullet+\frac{1}{L} r_n]$ we get by \eqref{lipschitz} 
\[
	\operatorname{supp}\left(\frac{\rho_0^{(n)}}{\partial_x \varphi_n}\right) \circ \varphi_n^{-1} \subset [\varphi_n(a_\bullet)-r_n,\varphi_n(a_\bullet)+r_n]
\]
and
\[
	\operatorname{supp}\left(\frac{\rho_0^{(n)}}{\partial_x \tilde \varphi_n}\right) \circ \tilde \varphi_n^{-1} \subset [\tilde \varphi_n(a_\bullet)-r_n,\tilde \varphi_n(a_\bullet)+r_n].
\]
By our choice $r_n=\frac{m}{8n}\|w_1\|_{H^s}$ and the fact the the centers of the intervals have a distance of at least $\frac{m}{2n}\|w_1\|_{H^{s-1}}$, the supports are in such a way apart that we can separate the expressions as we did in \cite{bfamily}. To be precise there is $\tilde C > 0$ s.t.
\begin{align*}
	&\left\| \left(\frac{\rho_n}{\partial_x \varphi_n}\right) \circ \varphi_n^{-1}-\left(\frac{\rho_n}{\partial_x \tilde \varphi_n}\right) \circ \tilde \varphi_n^{-1}\right\|_{H^{s-1}} \\
	&\geq \tilde C \left( \left\| \left(\frac{\rho_n}{\partial_x \varphi_n}\right) \circ \varphi_n^{-1}\right\|_{H^{s-1}} +\left\| \left(\frac{\rho_n}{\partial_x \tilde \varphi_n}\right) \circ \tilde \varphi_n^{-1}\right\|_{H^{s-1}}\right)
\end{align*}
for large $n$. Using this and \eqref{R2} we can estimate
\begin{align*}
	&\limsup_{n \to \infty} \left(\left\| \left(\frac{\rho_n}{\partial_x \varphi_n}\right) \circ \varphi_n^{-1}-\left(\frac{\rho_n}{\partial_x \tilde \varphi_n}\right) \circ \tilde \varphi_n^{-1}\right\|_{H^{s-1}} \right) \\
	&\geq \frac{2 \tilde C}{C_2} \|\rho_n\|_{H^{s-1}}=\frac{2 \tilde C}{C_2}R/4 > 0.
\end{align*}
In summary, we constructed for an arbitrary $R \in (0,R_\ast]$ a pair of sequences 
\[
	(u_0^{(n)},\rho_0^{(n)}),(\tilde u_0^{(n)},\tilde \rho_0^{(n)}) \in B_R((u_\bullet,\rho_\bullet))
\]
with 
\[
	\lim_{n \to 0} \left(\|u_0^{(n)}-\tilde u_0^{(n)}\|_{H^s} + \|\rho_0^{(n)}-\tilde \rho_0^{(n)}\|_{H^{s-1}}\right)=0
\]
whereas
\[
	\limsup_{n \to \infty} \|\Phi(u_0^{(n)},\rho_0^{(n)})-\Phi(\tilde u_0^{(n)},\tilde \rho_0^{(n)})\|_{H^s \times H^{s-1}} > 0.
\]
This shows that $\left.\Phi\right|_{B_R((u_\bullet,\rho_\bullet))}$ is not uniformly continuous. This finishes the proof.
\end{proof}

Using Proposition \ref{prop_nonuniform} we can proof Theorem \ref{th_nonuniform}.

\begin{proof}[Proof of Theorem \ref{th_nonuniform}]
	By \eqref{solution_map} We have for $T > 0$
	\[
		\Phi_T:U_T \to H^s(\R) \times H^{s-1}(\R),\quad (u_0,\rho_0) \mapsto \Phi_T(u_0,\rho_0)=\frac{1}{T} \Phi(T u_0,T \rho_0).
	\]
Thus Proposition \ref{prop_nonuniform} shows that $\Phi_T$ is nowhere uniformly continuous.
\end{proof}

{\bf Acknowledgement.} This work was supported by the BAGEP Award of the Science Academy.

\bibliographystyle{plain}

\flushleft
\author{ Hasan \.{I}nci\\
Department of Mathematics, Ko\c{c} University\\
Rumelifeneri Yolu\\
34450 Sar{\i}yer \.{I}stanbul T\"urkiye\\
        {\it email: } {hinci@ku.edu.tr}
}

\end{document}